\documentclass[11pt]{amsart}

\usepackage[T1]{fontenc}
\usepackage{lmodern}
\usepackage{microtype}
\usepackage{amsmath,amssymb,amsthm,mathtools}
\usepackage{booktabs}
\usepackage{enumitem}
\usepackage[hidelinks]{hyperref}
\usepackage[margin=1.1in]{geometry}

\newtheorem{theorem}{Theorem}[section]
\newtheorem{proposition}[theorem]{Proposition}
\newtheorem{lemma}[theorem]{Lemma}
\newtheorem{corollary}[theorem]{Corollary}
\theoremstyle{remark}
\newtheorem{remark}[theorem]{Remark}

\newcommand{\Pp}{\mathbb{P}}
\newcommand{\Ee}{\mathbb{E}}
\newcommand{\Vv}{\operatorname{Var}}
\newcommand{\Pois}{\operatorname{Pois}}
\newcommand{\Stirling}[2]{\genfrac\{\}{0pt}{}{#1}{#2}}

\title[The Last Isolated Vertex on Cycle Powers]{The Last Isolated Vertex in Random-Order Uncovering of Cycles and Their Powers:\\Exact Enumeration and Weibull Limits}
\author{Igor Kleiner}
\address{Department of Data Science, HIT -- Holon Institute of Technology, Holon, Israel; University of Haifa, Haifa, Israel}
\email{igorkl@hit.ac.il}
\date{}
\subjclass[2020]{Primary 60C05; Secondary 05A15, 60F05, 05C80}
\keywords{random vertex uncovering, site percolation, isolated vertex, cycle power, Stirling numbers, peakless permutations, Poisson approximation, Weibull distribution}

\begin{document}

\begin{abstract}
Let the vertices of a graph be revealed one at a time in a uniformly random order, and let the last-isolation time be the last time at which the induced graph on the revealed vertices contains an isolated vertex.  For the cycle $C_n$, write $K_n$ for the number of vertices still unrevealed at this time.  We obtain an exact finite-$n$ tail formula for $K_n$ in terms of Stirling numbers of the second kind, together with a compact bivariate generating function.

The enumeration comes from reversing the process.  An isolated revealed vertex then corresponds to the cyclic pattern $101$, but the tail event requires this pattern to be absent from every earlier prefix, not merely from the final set.  This prefix condition forces the final one-blocks to be separated by zero-gaps of length at least two and forces the reveal order inside each one-block to be peakless.  Since a block of size $m$ has $2^{m-1}$ peakless orders, summing over block sizes produces the Stirling numbers.

We also prove
\[
 \frac{K_n}{\sqrt n}\xrightarrow{d}R,\qquad \Pp(R>x)=e^{-x^2},
\]
and, for every fixed $d\ge1$,
\[
 \frac{K_{n,d}}{n^{1-1/(2d)}}\xrightarrow{d}W_d,
 \qquad \Pp(W_d>x)=e^{-x^{2d}}.
\]
Uniform stretched-exponential tail bounds imply convergence of all fixed positive moments.  Thus the ordinary cycle has a Rayleigh limit, while its fixed powers give a Weibull family with shape parameter $2d$.
\end{abstract}

\maketitle

\section{Introduction}

Let a fixed graph be uncovered by revealing its vertices one at a time in a uniformly random order, retaining at each time the subgraph induced by the revealed vertices.  We study the last time at which this induced graph contains an isolated vertex.  Our exact result concerns the cycle $C_n$, and the asymptotic analysis extends to every fixed power $C_n^d$.

Random-order vertex exposure has appeared in several related settings.  Newman and Ziff, for example, use a random ordering of sites in their site-percolation algorithm \cite{NewmanZiff2001}.  In the language of randomly evolving binary sequences, the classical ``island'' problem concerns occupied runs.  Janson traces this problem back to H\"allstr\"om's 1952 paper \cite{Hallstrom1952} and reports that H\"allstr\"om studied the maximum number of islands, including a cyclic version \cite{Janson2009Runs}.  Janson also studies cyclic runs of length one and the maximum of their count over the evolution \cite{Janson2009Runs}.  We ask a different question: when is that singleton count nonzero for the last time?  For singleton runs, the maximum is a bulk statistic, attained around one third of the evolution, whereas the stopping time studied here lies near the end.

Janson's recent work on \emph{uncovering a graph} studies the same random-order vertex process in a general setting, with particular emphasis on the evolution of visible edges \cite{Janson2026Uncovering}.  For trees it also treats the number of components in the visible subgraph, and the method is extended to counts of other small subgraphs.  Random-order exposure of powers of directed paths appears in the best-choice problem of Benevides and Su{\l}kowska \cite{BenevidesSulkowska2017}.  Related optimal-stopping problems ask when to stop so as to maximize the number of components in a random-order induced subgraph \cite{Lason2021,BenevidesSulkowska2022}.

For the ordinary cycle, reversing the uncovering order converts the event into the first occurrence of the cyclic pattern $101$.  Requiring this pattern to be absent at every earlier prefix imposes two constraints on the final reverse set: zero-gaps have length at least two, and the relative reveal order inside each one-block has no internal peak.  Since a block of size $m$ has $2^{m-1}$ peakless orders, summing over block sizes introduces Stirling numbers of the second kind and yields the exact finite-$n$ distribution.  Peaks and prescribed peak sets of permutations are classical objects \cite{BilleyBurdzySagan2013}.  Here peaklessness is not imposed as a separate permutation restriction; it is forced by requiring every earlier prefix to avoid $101$.

For powers of cycles, the limiting law is governed by rare completed neighborhoods in the reversed process.  We handle these locally dependent rare events with factorial moments, using the overlap geometry of $C_n^d$, and then compare the completed-neighborhood time with the true last-isolation time.  This gives a Weibull limit with shape $2d$ and convergence of all fixed positive moments.  For the classical Poisson-approximation framework for locally dependent events, see Arratia, Goldstein and Gordon \cite{ArratiaGoldsteinGordon1989}.

A last-isolated-vertex stopping time also appears in random graph processes with random edge exposure \cite{GlebovLuriaSimkin2021}; there the randomness is in the edges rather than in the vertex order.

The main results are as follows.
\begin{enumerate}[label=\textup{(\roman*)}]
 \item For $C_n$, we obtain the exact tail law
 \[
 \Pp(K_n>k)=
 \frac{(n-k)!}{(n-1)!}
 \sum_{r=1}^{\min\{k,\lfloor(n-k)/2\rfloor\}}
 2^{k-r}(r-1)!\Stirling{k}{r}
 \binom{n-k-r-1}{r-1},
 \]
 for $1\le k\le n-1$, together with a compact bivariate generating function.
 \item We prove
 \[
   K_n/\sqrt n\xrightarrow{d}R,
   \qquad \Pp(R>x)=e^{-x^2},
 \]
 and convergence of every fixed positive moment.
 \item For every fixed $d\ge1$, we prove
 \[
   \frac{K_{n,d}}{n^{1-1/(2d)}}\xrightarrow{d}W_d,
   \qquad \Pp(W_d>x)=e^{-x^{2d}},
 \]
 again with convergence of every fixed positive moment.
\end{enumerate}
To our knowledge, this last-isolation time has not been studied before, and the exact finite-$n$ distribution obtained here appears to be new.  The Rayleigh and Weibull limits concern this stopping statistic; their proofs use standard rare-event and Poisson-approximation ideas.

\section{Model and reverse-process representation}

Let $C_n$ be the cycle on vertex set $\mathbb Z/n\mathbb Z$, with $n\ge 3$.  Let
\[
 \pi=(\pi_1,\ldots,\pi_n)
\]
be a uniformly random permutation of its vertices and set
\[
 A_t=\{\pi_1,\ldots,\pi_t\},\qquad 0\le t\le n.
\]
A vertex $v\in A_t$ is called \emph{isolated at time $t$} if neither neighbor of $v$ lies in $A_t$.  Define
\[
 \tau_n=\max\{t: C_n[A_t]\text{ contains an isolated vertex}\}
\]
and
\[
 K_n=n-\tau_n.
\]
Thus $K_n$ is the number of vertices still unrevealed when an isolated revealed vertex is seen for the last time.

Reverse the random order.  Write
\[
 \rho_j=\pi_{n-j+1},\qquad
 U_k=\{\rho_1,\ldots,\rho_k\}.
\]
At forward time $n-k$, the unrevealed set is precisely $U_k$.

For a subset $U\subseteq V(C_n)$, encode membership by a cyclic binary word, with $1$ denoting membership in $U$.  We say that $U$ contains the cyclic pattern $101$ if there exists $v$ such that
\[
 v-1,v+1\in U,\qquad v\notin U.
\]

\begin{lemma}[First $101$ representation]\label{lem:first101}
For every $n\ge3$,
\[
 K_n=\min\{k\ge 0:U_k\text{ contains the cyclic pattern }101\}.
\]
Consequently,
\[
 \{K_n>k\}=\{U_j\text{ contains no }101\text{ for every }j\le k\}.
\]
\end{lemma}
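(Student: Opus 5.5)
The plan is to translate the forward isolation condition at time $t$ into a statement about the complementary set $U_{n-t}$, and then convert the maximum over $t$ into a minimum over $k=n-t$. The key observation is that $A_t$ and $U_{n-t}$ partition $V(C_n)$. Indeed, $A_t=\{\pi_1,\ldots,\pi_t\}$ and $U_{n-t}=\{\rho_1,\ldots,\rho_{n-t}\}=\{\pi_n,\ldots,\pi_{t+1}\}$. Hence, for a vertex $v$, we have $v\in A_t$ if and only if $v\notin U_{n-t}$.

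First I would show that a vertex $v$ is isolated in $C_n[A_t]$ exactly when $v\notin U_{n-t}$ and $v-1,v+1\in U_{n-t}$. The neighbours of $v$ in $C_n$ are $v\pm1$, and these are distinct and different from $v$ because $n\ge3$. So ``$v\in A_t$ and neither neighbour lies in $A_t$'' translates verbatim into this condition. It follows that $C_n[A_t]$ contains an isolated vertex if and only if $U_{n-t}$ contains the cyclic pattern $101$.

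Next I would check that the set defining $\tau_n$ is nonempty, so that $\tau_n$ and $K_n$ are well defined. At $t=1$ the single revealed vertex $\pi_1$ has no revealed neighbour, since its neighbours differ from it when $n\ge3$; so it is isolated. Hence $\tau_n\ge1$. Substituting $k=n-t$, the map $t\mapsto n-t$ is an order-reversing bijection of $\{0,\ldots,n\}$. Therefore
\[
 n-\tau_n=n-\max\{t: U_{n-t}\text{ contains }101\}=\min\{k: U_k\text{ contains }101\},
\]
which is the first claim. As a sanity check on the endpoints, $U_0=\emptyset$ and $U_n=V(C_n)$ contain no $101$, matching the facts that $A_n$ has no isolated vertex and $A_0$ is empty.

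The consequence then follows directly from the definition of a minimum: $K_n>k$ holds if and only if no $j\le k$ has $U_j$ containing $101$. I would stress that the argument uses the minimum itself and not any monotonicity. Containing $101$ is not monotone in $k$, since a later reverse vertex can fill the $0$ in a pattern. This is exactly why the tail event is a condition on every prefix and not just on $U_k$. There is no real obstacle in this proof. The only points needing care are the use of $n\ge3$, which ensures that $v-1$, $v$ and $v+1$ are distinct, and the precise index bookkeeping that identifies $U_{n-t}$ as the complement of $A_t$.
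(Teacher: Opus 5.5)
Your proposal is correct and follows the paper's own argument. You identify $U_{n-t}$ as the complement of $A_t$, translate isolation of $v$ at forward time $t$ into the cyclic $101$ centered at $v$ in $U_{n-t}$, and turn the maximum over $t$ into a minimum over $k=n-t$; your extra checks (nonemptiness via $t=1$, the role of $n\ge3$, and the point that the consequence uses the minimum rather than monotonicity) are accurate details the paper leaves implicit.
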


\begin{proof}
At forward time $n-k$, a revealed vertex $v$ is isolated exactly when both of its neighbors remain unrevealed.  Since the unrevealed set is $U_k$, this is equivalent to
\[
 v\notin U_k,\qquad v-1,v+1\in U_k,
\]
which is exactly a cyclic $101$.  Maximizing the forward time is therefore equivalent to minimizing the reverse time.
\end{proof}

\section{Exact enumeration}

Fix $k$ and condition on the final reverse set $U_k=S$.  Suppose the $1$'s of $S$ form $r$ cyclic blocks with lengths
\[
 m_1,\ldots,m_r,
 \qquad m_i\ge1,
 \qquad \sum_{i=1}^r m_i=k,
\]
and let the intervening $0$-block lengths be
\[
 z_1,\ldots,z_r,
 \qquad \sum_{i=1}^r z_i=n-k.
\]

\begin{lemma}[Safe prefixes]\label{lem:safe}
The event $K_n>k$ holds for a given ordered $k$-prefix if and only if both of the following conditions hold:
\begin{enumerate}[label=\textup{(\roman*)}]
 \item every zero-block has length at least two, $z_i\ge2$;
 \item within each one-block, the relative order of reverse reveal times has no internal peak.
\end{enumerate}
\end{lemma}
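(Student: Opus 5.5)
The plan is to use Lemma~\ref{lem:first101}, which turns $\{K_n>k\}$ into the statement that no prefix set $U_j$ with $j\le k$ contains a cyclic $101$. The whole argument rests on the monotonicity $U_0\subseteq U_1\subseteq\cdots\subseteq U_k=S$. Write $t(u)\in\{1,\ldots,k\}$ for the reverse reveal time of $u\in S$. A block vertex $v$ has an \emph{internal peak} when both of its cyclic neighbours lie in the same one-block and satisfy $t(v-1)<t(v)>t(v+1)$. Since $k\le n-1$, every one-block is a proper cyclic interval, so it is a path, and its endpoints always have a neighbour outside $S$. I will classify a potential witness $v$ of a $101$ in some $U_j$ according to whether $v\in S$.

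\emph{Necessity.} Suppose (i) fails, so some zero-block has length $z_i=1$; it cannot be $0$ by the definition of blocks. Call its single vertex $v$. Then $v\notin S$ and $v\pm1\in S$, so $U_k=S$ already contains $101$, and Lemma~\ref{lem:first101} gives $K_n\le k$. This covers $r=1$ as well, where both neighbours of $v$ are the two ends of the same block; this is fine because $n\ge3$ makes $v-1\ne v+1$. Now suppose (ii) fails, so some block has an internal peak at $v$. Take $j=t(v)-1$. Then $v-1,v+1\in U_j$ because their reveal times are smaller, while $v\notin U_j$. Hence $U_j$ contains $101$ with $j<k$, and again $K_n\le k$.

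\emph{Sufficiency.} Assume (i) and (ii), and suppose for contradiction that some $U_j$ with $j\le k$ contains a $101$ at $v$, meaning $v\notin U_j$ and $v\pm1\in U_j\subseteq S$. There are two cases.
\begin{itemize}
\item If $v\notin S$, then $v$ is a zero-vertex flanked by two one-vertices, so it forms a zero-block of length $1$. This contradicts (i).
\item If $v\in S$, then $v,v-1,v+1$ all lie in $S$ and are consecutive, so they belong to a single one-block with $v$ internal. Moreover $t(v)>j\ge\max\{t(v-1),t(v+1)\}$, so $v$ is an internal peak. This contradicts (ii).
\end{itemize}
So no prefix contains $101$, and Lemma~\ref{lem:first101} gives $K_n>k$.

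The only delicate step is the bookkeeping at the edges: excluding $z_i=0$, the degenerate single-block case $r=1$, and making sure the word ``internal'' matches exactly the vertices whose two neighbours both lie in $S$. This matching is what lets conditions (i) and (ii) split cleanly along the two cases $v\notin S$ and $v\in S$. I expect this to be the main point to state carefully; the rest is immediate from the monotonicity of the prefixes.
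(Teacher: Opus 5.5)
Your proof is correct and follows the paper's argument: necessity comes from the final set when some $z_i=1$, and from the prefix just before an internal peak's center is revealed; sufficiency splits a $101$ witness $v$ according to whether $v\in S$. Your extra bookkeeping (one-blocks are proper intervals since $k\le n-1$, and the $r=1$ case) tightens the same route without changing it.
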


\begin{proof}
If some $z_i=1$, the final set $U_k$ itself contains a $101$, so $K_n\le k$.

Now consider three consecutive vertices inside one final one-block.  Let their reverse reveal times be $a_{i-1},a_i,a_{i+1}$.  If
\[
 a_i>\max(a_{i-1},a_{i+1}),
\]
then immediately after both neighbors have appeared and before the center appears, the reverse set contains $101$.  Hence every block order must have no internal peak.

Conversely, suppose a $101$ occurs at some prefix $U_j$, centered at $v$.  If $v\notin S$, then in the final set the two neighboring one-blocks are separated by a single zero, contradicting (i).  If $v\in S$, then $v$ belongs to a final one-block but appears after both of its neighbors, giving an internal peak and contradicting (ii).  Thus (i) and (ii) are also sufficient.
\end{proof}

\begin{lemma}[Peakless permutations]\label{lem:peakless}
The number of permutations of $[m]$ with no internal peak is
\[
 2^{m-1}.
\]
\end{lemma}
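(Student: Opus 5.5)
We prove the count by locating the largest entry. Let $\sigma=(\sigma_1,\ldots,\sigma_m)$ be a permutation of $[m]$, and call an index $1<i<m$ an \emph{internal peak} if $\sigma_i>\max(\sigma_{i-1},\sigma_{i+1})$. This is the notion used in Lemma~\ref{lem:safe}, with $\sigma_i$ the relative reverse reveal time of the $i$-th vertex of the block.

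The key observation is that the maximal value $m$ must sit at position $1$ or position $m$. If it sat at an interior position $i$, it would exceed both neighbors and so form an internal peak. This holds for all $m\ge 2$; for $m=1$ there is one permutation and $2^{0}=1$, and for $m=2$ both permutations are peakless, giving $2^{1}$.

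Next, removing an endpoint maximum preserves peaklessness in both directions. Suppose $m$ sits at position $1$ or position $m$, and delete it. The remaining word, standardized to a permutation of $[m-1]$, keeps its original order. Its internal positions are internal positions of $\sigma$ with the same two neighbors, except possibly the position adjacent to the removed entry. That position becomes an endpoint, so it can no longer be an internal peak; in $\sigma$ it could not have been one either, since its neighbor $m$ is larger. Conversely, attach $m$ at either end of a peakless permutation of $[m-1]$. The old endpoint next to $m$ becomes internal, but it is smaller than $m$, so it is not a peak, and no other position changes its neighbors. Hence the peakless permutations of $[m]$ with $m$ at the left end, and those with $m$ at the right end, are each in bijection with the peakless permutations of $[m-1]$.

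These two classes are disjoint for $m\ge2$, since $m$ cannot occupy both ends. Therefore $p_m=2p_{m-1}$ with $p_1=1$, which gives $p_m=2^{m-1}$.

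The argument has no real obstacle. The only care needed is the boundary bookkeeping: deleting or inserting $m$ at an end creates or destroys no internal peak, and the two endpoint cases do not overlap. An equivalent description is that a peakless permutation is exactly one that first decreases to $1$ and then increases (a V-shape); one chooses which of $2,\ldots,m$ lie to the left of $1$, again giving $2^{m-1}$.
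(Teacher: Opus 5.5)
Your proof is correct and is essentially the paper's argument: the maximum $m$ must sit at an end, deleting it or adjoining it at either end preserves peaklessness, and so $p_m=2p_{m-1}$ with $p_1=1$. You add the boundary bookkeeping and the disjointness of the two endpoint cases, which the paper leaves implicit, and your closing V-shape description is a valid alternative count.
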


\begin{proof}
In a permutation with no internal peak, the maximum entry $m$ must lie at one of the two ends.  Removing it leaves a peakless permutation of $[m-1]$.  Conversely, adjoining $m$ to either end of a peakless permutation creates no internal peak.  Thus, if $p_m$ denotes the number of peakless permutations,
\[
 p_m=2p_{m-1},\qquad p_1=1,
\]
which gives $p_m=2^{m-1}$.
\end{proof}

For fixed block sizes $m_1,\ldots,m_r$, Lemma \ref{lem:peakless} shows that the number of globally ordered $k$-prefixes consistent with those blocks and safe inside every block is
\[
 \frac{k!}{m_1!\cdots m_r!}\prod_{i=1}^r2^{m_i-1}.
\]
Introduce
\[
 A(x)=\sum_{m\ge1}\frac{2^{m-1}}{m!}x^m
      =\frac{e^{2x}-1}{2}.
\]
The standard exponential generating function for Stirling numbers of the second kind gives
\begin{equation}\label{eq:stirlingcoef}
 [x^k]A(x)^r
 =\frac{2^{k-r}r!}{k!}\Stirling{k}{r}.
\end{equation}
The number of compositions of $n-k$ into $r$ zero-blocks of size at least two is
\begin{equation}\label{eq:zeroblocks}
 \binom{n-k-r-1}{r-1}.
\end{equation}
Finally, sum over all ordered lists of one-block and zero-block sizes with $r$ one-blocks, and root a cyclic configuration at a distinguished first vertex of a one-block.  There are $n$ choices for the root.  Forgetting which one-block was distinguished identifies exactly $r$ rooted descriptions of every unrooted cyclic configuration.  Thus, after summing over the ordered block data, the cyclic factor is $n/r$.

\begin{theorem}[Exact tail distribution]\label{thm:exact}
For $n\ge3$ and $1\le k\le n-1$,
\begin{equation}\label{eq:exacttail}
\boxed{
 \Pp(K_n>k)=
 \frac{(n-k)!}{(n-1)!}
 \sum_{r=1}^{\min\{k,\lfloor(n-k)/2\rfloor\}}
 2^{k-r}(r-1)!\Stirling{k}{r}
 \binom{n-k-r-1}{r-1}.}
\end{equation}
Hence
\[
 \Pp(K_n=k)=\Pp(K_n>k-1)-\Pp(K_n>k).
\]
\end{theorem}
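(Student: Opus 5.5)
The plan is a direct count of favourable ordered prefixes, normalised by the total number of ordered prefixes. The reversed sequence $(\rho_1,\ldots,\rho_k)$ is a uniformly random injective word of length $k$ in $\mathbb Z/n\mathbb Z$. Each such word has probability $(n-k)!/n!$. By Lemma \ref{lem:first101}, $\{K_n>k\}$ depends only on this prefix, so
\[
 \Pp(K_n>k)=\frac{(n-k)!}{n!}\,N_{n,k},
\]
where $N_{n,k}$ is the number of injective $k$-words satisfying conditions (i) and (ii) of Lemma \ref{lem:safe}. Since $1\le k\le n-1$, the set $S=U_k$ is a proper nonempty subset. It therefore has a well-defined number $r\ge1$ of cyclic one-blocks and the same number of zero-blocks, and we can split $N_{n,k}$ according to $r$.

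\textbf{Counting orders on a fixed set.} For a fixed set $S$ with block sizes $m_1,\ldots,m_r$ satisfying (i), the admissible orders are counted as follows. First choose which positions of the word go to which block, giving the multinomial $k!/\prod m_i!$. Then choose a peakless relative order inside each block. By Lemma \ref{lem:peakless} this gives $\prod_i 2^{m_i-1}$ choices. Condition (ii) is a condition on each block separately, and the interleaving across blocks is unconstrained by Lemma \ref{lem:safe}, so the count factorises as stated before the theorem.

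\textbf{Counting sets, the step needing most care.} We sum over sets $S$ with $r$ blocks and all zero-gaps at least two. Consider pairs $(S,B)$, where $B$ is a distinguished one-block of $S$. Each such pair corresponds bijectively to a triple $(v,(m_i),(z_i))$ as follows.
\begin{itemize}[label=--]
 \item $v\in\mathbb Z/n\mathbb Z$ is the first vertex of $B$ in the positive direction.
 \item $(m_1,z_1,\ldots,m_r,z_r)$ are the block and gap lengths read cyclically from $v$.
\end{itemize}
Because vertices are labelled, rotational symmetry causes no overcounting: the map from triples to pairs is injective and surjective. Each $S$ has exactly $r$ choices of $B$. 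Hence, summing any function of the multiset of block sizes over sets equals $n/r$ times the sum over ordered data. This justifies the cyclic factor $n/r$.

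\textbf{Assembling the formula.} The sum over ordered compositions $(m_i)$ of $k$ of $\frac{k!}{\prod m_i!}\prod 2^{m_i-1}$ equals $k!\,[x^k]A(x)^r$. By \eqref{eq:stirlingcoef} this is $2^{k-r}r!\Stirling{k}{r}$. The sum over $(z_i)$ with $z_i\ge2$ is given by \eqref{eq:zeroblocks}. Therefore
\[
 N_{n,k}=\sum_r \frac nr\,2^{k-r}r!\Stirling{k}{r}\binom{n-k-r-1}{r-1}.
\]
Multiplying by $(n-k)!/n!$ turns $n\cdot r!/r$ into $n\,(r-1)!$, and combining $n$ with $(n-k)!/n!$ gives the prefactor $(n-k)!/(n-1)!$. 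This yields \eqref{eq:exacttail}.

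The summation range is $r\le k$, since each one-block is nonempty, and $2r\le n-k$, since each zero-block has at least two vertices. Outside this range either $\Stirling{k}{r}$ or the binomial count is zero. The point mass formula is then immediate from $\{K_n=k\}=\{K_n>k-1\}\setminus\{K_n>k\}$.
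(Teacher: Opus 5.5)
Your proposal is correct and follows essentially the same route as the paper: you count safe ordered $k$-prefixes via Lemmas \ref{lem:safe} and \ref{lem:peakless}, use the multinomial interleaving together with \eqref{eq:stirlingcoef} and the composition count \eqref{eq:zeroblocks}, apply the cyclic factor $n/r$, and divide by $(n)_k$. Your explicit bijection between pairs $(S,B)$ and rooted data $(v,(m_i),(z_i))$ is a more careful write-up of the paper's own rooting argument for the factor $n/r$.
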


\begin{proof}
For a fixed number $r$ of one-blocks, multiply the cyclic factor $n/r$, the zero-block count \eqref{eq:zeroblocks}, and $k!$ times the coefficient in \eqref{eq:stirlingcoef}.  The number of safe ordered $k$-prefixes with $r$ one-blocks is therefore
\[
 n\,2^{k-r}(r-1)!\Stirling{k}{r}
 \binom{n-k-r-1}{r-1}.
\]
There are $(n)_k=n!/(n-k)!$ ordered $k$-prefixes in total.  Summing over all feasible $r$ and dividing by $(n)_k$ gives \eqref{eq:exacttail}.
\end{proof}

\begin{corollary}[A bivariate generating function]\label{cor:gf}
For $0\le k\le n$, let
\[
 Q_{n,k}=\binom nk\Pp(K_n>k).
\]
Then $Q_{n,0}=1$, and for $1\le k\le n$,
\[
 \boxed{
 Q_{n,k}
 =n[x^ky^{n-k}]
 \log\!\left(
 \frac{1}{1-\frac{e^{2x}-1}{2}\frac{y^2}{1-y}}
 \right).}
\]
\end{corollary}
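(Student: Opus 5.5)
We derive the corollary directly from Theorem~\ref{thm:exact} by expanding the logarithm as a power series in the single composite variable
\[
 u=\frac{e^{2x}-1}{2}\,\frac{y^2}{1-y}=A(x)\,B(y),\qquad B(y)=\frac{y^2}{1-y}=\sum_{z\ge2}y^z .
\]
Since $\log\frac1{1-u}=\sum_{r\ge1}u^r/r$, the coefficient extraction factorizes:
\[
 n[x^ky^{n-k}]\log\frac1{1-u}=n\sum_{r\ge1}\frac1r\,[x^k]A(x)^r\,[y^{n-k}]B(y)^r .
\]
Here \eqref{eq:stirlingcoef} supplies $[x^k]A(x)^r=2^{k-r}r!\Stirling{k}{r}/k!$. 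For the second factor, $[y^{n-k}]B(y)^r$ counts compositions of $n-k$ into $r$ parts each at least two, which by \eqref{eq:zeroblocks} equals $\binom{n-k-r-1}{r-1}$. This vanishes unless $2r\le n-k$, just as $\Stirling{k}{r}$ vanishes for $r>k$ when $k\ge1$. So the range of summation automatically reduces to the range $1\le r\le\min\{k,\lfloor (n-k)/2\rfloor\}$ used in \eqref{eq:exacttail}.

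Multiplying the factors gives
\[
 n\sum_{r}\frac{2^{k-r}(r-1)!}{k!}\Stirling{k}{r}\binom{n-k-r-1}{r-1}.
\]
Theorem~\ref{thm:exact} says $\Pp(K_n>k)$ equals $\frac{(n-k)!}{(n-1)!}$ times the same sum without the $n/k!$ factor. Multiplying by $\binom nk=\frac{n!}{k!(n-k)!}$ gives $\frac{n!}{k!(n-1)!}=\frac{n}{k!}$ times that sum, which is exactly the expression above. This proves the formula for $1\le k\le n-1$.

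Two boundary cases remain, and they are the only places needing care. For $k=0$, $K_n\ge0$ and $U_0=\emptyset$ contains no $101$, so $\Pp(K_n>0)=1$ and $Q_{n,0}=1$. Note that $k=0$ is excluded from the generating-function formula: the $x^0$ coefficient of the logarithm vanishes because $A(0)=0$, consistent with the formula being stated only for $k\ge1$. For $k=n$, the right-hand side involves $[y^0]B(y)^r=0$ for all $r\ge1$, so it equals $0$. On the probability side, $K_n\le n$ always holds. One should check that $K_n$ is finite, i.e.\ that some isolated vertex is seen at some time. Forward time $t=1$ gives a single revealed vertex whose neighbours are both unrevealed when $n\ge3$, so $\tau_n\ge1$ and $K_n\le n-1$. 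Hence $\Pp(K_n>n)=0$, which matches the right-hand side.

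The main obstacle is only bookkeeping: making sure the $1/r$ from the logarithm matches the cyclic factor $n/r$ of Theorem~\ref{thm:exact}, and checking the index ranges and these edge cases.
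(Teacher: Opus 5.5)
Your proof is correct and is essentially the paper's argument: you expand $\log\frac{1}{1-u}=\sum_{r\ge1}u^r/r$ with $u=A(x)\,y^2/(1-y)$ and match the $r$th term against the $r$-block count of Theorem~\ref{thm:exact}, where the $1/r$ plays the role of the cyclic factor $n/r$; the paper reads this same expansion combinatorially rather than checking it against the closed formula. Your explicit treatment of the boundary cases $k=0$ and $k=n$, via $1\le K_n\le n-1$, is a useful addition, since Theorem~\ref{thm:exact} only covers $k\le n-1$ and the paper leaves the case $k=n$ implicit.
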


\begin{proof}
A one-block contributes $A(x)=(e^{2x}-1)/2$, and a zero-block of length at least two contributes $y^2/(1-y)$.  A cyclic sequence of $r$ alternating one-blocks and zero-blocks carries the factor $n/r$.  The resulting coefficient counts safe ordered $k$-prefixes divided by $k!$; since $(n)_k/k!=\binom nk$, this is exactly $Q_{n,k}$.  Summing over $r\ge1$ yields the logarithm.
\end{proof}

\begin{remark}[An OEIS connection]\label{rem:oeis}
Let $A_{n,k}:=(n)_k\Pp(K_n>k)$, the number of safe ordered reverse prefixes of length $k$.  Then $A_{n,n-2}=n2^{n-3}$ for $n\ge3$ and $A_{n,n-3}=n2^{n-4}$ for $n\ge4$; after the index shift $m=n-2$, these are OEIS A001792 and A087447, respectively.  This observation is not used below.
\end{remark}

\section{The Rayleigh limit}

The exact formula is useful for finite $n$, but the limiting distribution has a simpler probabilistic proof.

Construct an auxiliary graph $H_n$ on the same vertex set as $C_n$ by joining vertices at cyclic distance two.  For $n\ge5$, $H_n$ has $n$ edges and is
\[
 H_n\cong
 \begin{cases}
 C_n,&n\text{ odd},\\
 C_{n/2}\sqcup C_{n/2},&n\text{ even}.
 \end{cases}
\]
Define
\[
 J_n=\min\{k:U_k\text{ contains an edge of }H_n\}.
\]
Clearly $J_n\le K_n$.

\begin{lemma}[Poisson collision limit]\label{lem:poisson}
For every fixed $x\ge0$, if $k=\lfloor x\sqrt n\rfloor$ and
\[
 X_{n,k}=\#\{e\in E(H_n):e\subseteq U_k\},
\]
then
\[
 X_{n,k}\xrightarrow{d}\Pois(x^2).
\]
Consequently,
\[
 \Pp(J_n>x\sqrt n)\longrightarrow e^{-x^2}.
\]
\end{lemma}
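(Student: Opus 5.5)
We prove the Poisson limit by the method of factorial moments: for each fixed integer $s\ge1$ we show
\[
 \Ee\bigl[(X_{n,k})_s\bigr]\longrightarrow x^{2s}.
\]
Since the Poisson law is determined by its moments, this gives $X_{n,k}\xrightarrow{d}\Pois(x^2)$. Here $U_k$ is a uniformly random $k$-subset of $V(C_n)$, so for any set $F$ of vertices,
\[
 \Pp(F\subseteq U_k)=\frac{(k)_{|F|}}{(n)_{|F|}}.
\]
Write $(X_{n,k})_s$ as a sum over ordered $s$-tuples of distinct edges $(e_1,\dots,e_s)$ of $H_n$, weighted by $\Pp(e_1\cup\cdots\cup e_s\subseteq U_k)$. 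Then split the sum according to whether the edges are pairwise vertex-disjoint.

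The disjoint tuples give the main term. There are $(n)_s(1+O(1/n))$ of them: $H_n$ has $n$ edges and each edge meets only $O(1)$ others, since $H_n$ has maximum degree $2$. Each disjoint tuple contributes
\[
 \frac{(k)_{2s}}{(n)_{2s}}=\frac{k^{2s}}{n^{2s}}\bigl(1+o(1)\bigr),
\]
which is valid because $k=\lfloor x\sqrt n\rfloor$ and $s$ is fixed. The total is therefore $n^s\,k^{2s}/n^{2s}\to x^{2s}$, since $k^2/n\to x^2$. (If $x=0$ then $k=0$ and everything is trivial.)

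Overlapping tuples must be shown negligible, and this is the only step needing care, though it is routine. Group the tuples by the isomorphism type of the union graph $G=e_1\cup\cdots\cup e_s\subseteq H_n$. Suppose $G$ has $c$ components and $v$ vertices. Every component of $G$ is a connected subgraph of a cycle (a path, or possibly a whole cycle of $H_n$ when $s\ge n/2$, which is impossible for fixed $s$ and large $n$). So each component with $e'$ edges has $e'+1$ vertices, which gives $v=s+c$ when the edges are distinct. The number of placements of such a $G$ is $O(n^c)$, and the probability is $O\bigl((k/n)^{v}\bigr)=O\bigl(n^{-(s+c)/2}\bigr)$. The contribution is thus
\[
 O\bigl(n^{c-(s+c)/2}\bigr)=O\bigl(n^{(c-s)/2}\bigr),
\]
which tends to $0$ whenever $c<s$, that is, whenever some two edges share a vertex. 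There are finitely many types for fixed $s$, so the overlapping part is $o(1)$. This proves the factorial-moment convergence.

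Finally,
\[
 \{J_n>k\}=\{X_{n,k}=0\},
\]
since $U_j$ increases in $j$ and contains an edge of $H_n$ for some $j\le k$ iff $U_k$ does. Hence $\Pp(J_n>k)\to e^{-x^2}$. It remains to pass from $\{J_n>\lfloor x\sqrt n\rfloor\}$ to $\{J_n>x\sqrt n\}$. Because $J_n$ is integer-valued, $J_n>x\sqrt n$ iff $J_n>\lfloor x\sqrt n\rfloor$, so the two events coincide and the stated limit follows.
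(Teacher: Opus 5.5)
Your proposal is correct and follows the paper's proof essentially step for step. Both use factorial moments, take the main term $n^s(k)_{2s}/(n)_{2s}\to x^{2s}$ from disjoint tuples, and bound overlapping tuples by $O(n^{(c-s)/2})$ using $v=s+c$ for a union of paths. Your explicit remark that $J_n>x\sqrt n$ iff $J_n>\lfloor x\sqrt n\rfloor$, by integrality, spells out a step the paper leaves implicit.
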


\begin{proof}
For a fixed edge $e$,
\[
 \Pp(e\subseteq U_k)=\frac{(k)_2}{(n)_2},
\]
so
\[
 \Ee X_{n,k}=n\frac{(k)_2}{(n)_2}\longrightarrow x^2.
\]
We verify convergence of factorial moments.  Fix $r$.  In $\Ee[(X_{n,k})_r]$, ordered $r$-tuples of pairwise vertex-disjoint edges number $n^r+O(n^{r-1})$, and each such tuple is present with probability
\[
 \frac{(k)_{2r}}{(n)_{2r}}.
\]
Their contribution therefore tends to $x^{2r}$.

It remains to bound tuples containing an overlap.  Since $r$ is fixed, for all sufficiently large $n$ (in particular, $n>2r$) no selected set of at most $r$ edges can contain an entire cycle component of $H_n$.  Thus the graph formed by the distinct selected edges is a disjoint union of nontrivial paths.  If it has $c$ connected components and $v$ distinct vertices, then
\[
 v=r+c.
\]
For each fixed overlap type there are $O(n^c)$ placements in $H_n$, while the probability that all of its $v$ vertices lie in $U_k$ is
\[
 \frac{(k)_v}{(n)_v}=O\!\left((k/n)^v\right).
\]
Hence its total contribution is
\[
 O(n^c)\,O\!\left((k/n)^v\right)
 =O\!\left(n^{c-v/2}\right)
 =O\!\left(n^{(c-r)/2}\right).
\]
For an overlapping $r$-tuple, $c\le r-1$, so this is $O(n^{-1/2})$.  There are only finitely many overlap types for fixed $r$, and therefore all overlapping tuples together contribute $o(1)$.  Hence
\[
 \Ee[(X_{n,k})_r]\longrightarrow x^{2r},
\]
the factorial moments of $\Pois(x^2)$.  The standard factorial-moment criterion therefore gives
$X_{n,k}\xrightarrow{d}\Pois(x^2)$.  The final assertion follows from
\[
 \{J_n>k\}=\{X_{n,k}=0\}.
\]
\end{proof}

\begin{lemma}[The proxy and the true time agree at scale $\sqrt n$]\label{lem:proxy}
For fixed $x\ge0$ and $k=\lfloor x\sqrt n\rfloor$,
\[
 0\le \Pp(K_n>k)-\Pp(J_n>k)
 \le n\frac{(k)_3}{(n)_3}
 =O_x(n^{-1/2}).
\]
\end{lemma}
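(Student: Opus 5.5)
Since $J_n\le K_n$, the events are nested: $\{K_n>k\}\supseteq\{J_n>k\}$. This gives the lower bound $0$ at once. Writing $D=\{K_n>k\}\setminus\{J_n>k\}$, we get
\[
 \Pp(K_n>k)-\Pp(J_n>k)=\Pp(D).
\]
The plan is to show that on $D$ the final set $U_k$ contains three cyclically consecutive vertices $v-1,v,v+1$. A union bound over the $n$ such triples then gives $\Pp(D)\le n\,(k)_3/(n)_3$, because any fixed set of three vertices lies in $U_k$ with probability $(k)_3/(n)_3$.

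\textbf{Step 1: $D$ forces a triple in $U_k$.} On $D$ we have $J_n\le k$, so some $j\le k$ has $U_j$ containing an $H_n$-edge $\{v-1,v+1\}$. By Lemma~\ref{lem:first101}, $K_n>k$ means no $U_{j'}$ with $j'\le k$ contains a cyclic $101$. In particular $U_j$ has no $101$ centered at $v$, so $v\in U_j$. Since the sets $U_j$ increase, $\{v-1,v,v+1\}\subseteq U_j\subseteq U_k$.

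\textbf{Step 2: the bound.} The inclusion from Step 1 gives
\[
 \Pp(D)\le\sum_{v\in\mathbb Z/n\mathbb Z}\Pp(\{v-1,v,v+1\}\subseteq U_k)=n\frac{(k)_3}{(n)_3}.
\]
This uses that $U_k$ is a uniform $k$-subset and that the three vertices are distinct, which holds for $n\ge3$. With $k=\lfloor x\sqrt n\rfloor$ we have $(k)_3\le x^3n^{3/2}$ and $(n)_3\sim n^3$, so the right side is $O(x^3n^{-1/2})$.

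Step 1 could be refined, since the centre must have been revealed before both neighbours, which forbids a peak. That refinement would only improve constants, and the crude triple-containment bound already suffices.

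Both steps are routine. The only point needing care is in Step 1: the absence of $101$ must be applied at the earlier prefix time $j$, not only at time $k$. Membership of $v$ at time $j$ then propagates to $U_k$ by monotonicity.
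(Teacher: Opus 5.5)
Your proof is correct and follows the paper's argument. On $\{J_n\le k<K_n\}$, the absence of $101$ forces the centre $v$ of a present distance-two pair $\{v-1,v+1\}$ into $U_k$, and a union bound over the $n$ cyclic triples gives $n(k)_3/(n)_3$. The caution you flag about applying the no-$101$ condition at the earlier time $j$ is not actually needed: $U_k\supseteq U_{J_n}$ already contains the pair, so applying the condition at time $k$ itself would suffice.
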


\begin{proof}
If $J_n\le k<K_n$, then at the first time a distance-two pair $v-1,v+1$ appears, the center $v$ must already be present; otherwise that pair would create $101$ and force $K_n=J_n$.  Thus $U_k$ contains all three vertices $v-1,v,v+1$ for some $v$.  A union bound over the $n$ cyclic triples gives the stated bound.
\end{proof}

\begin{theorem}[Rayleigh limit]\label{thm:rayleigh}
As $n\to\infty$,
\[
 \boxed{
 \frac{K_n}{\sqrt n}\xrightarrow{d}R,
 \qquad \Pp(R>x)=e^{-x^2},\quad x\ge0.}
\]
Equivalently, $R$ is Rayleigh with scale parameter $1/\sqrt2$.
\end{theorem}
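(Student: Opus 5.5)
The plan is to combine Lemma \ref{lem:poisson} with Lemma \ref{lem:proxy} pointwise in $x$, and then pass from convergence of tail probabilities at the lattice points $\lfloor x\sqrt n\rfloor$ to convergence in distribution. Fix $x>0$ and set $k=\lfloor x\sqrt n\rfloor$. Since $K_n$ is integer valued, $\{K_n>x\sqrt n\}=\{K_n>k\}$, and likewise $\{J_n>x\sqrt n\}=\{J_n>k\}$. Lemma \ref{lem:poisson} gives $\Pp(J_n>k)\to e^{-x^2}$. Lemma \ref{lem:proxy} gives $|\Pp(K_n>k)-\Pp(J_n>k)|=O_x(n^{-1/2})\to0$. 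Hence
\[
 \Pp\bigl(K_n/\sqrt n>x\bigr)\longrightarrow e^{-x^2}
\]
for every fixed $x>0$. The case $x=0$ is trivial, because $K_n\ge1$ almost surely for $n\ge3$: $U_0=\emptyset$ contains no $101$, so $\Pp(K_n>0)=1=e^{0}$.

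Since $x\mapsto 1-e^{-x^2}$ is a continuous distribution function on $[0,\infty)$, convergence of $\Pp(K_n/\sqrt n\le x)$ at every $x\ge0$ is exactly convergence in distribution to $R$. The limit law places no mass on $(-\infty,0)$, and $K_n/\sqrt n\ge0$, so the negative half-line is automatic. Finally, I would identify $R$: a Rayleigh law with scale $\sigma$ has tail $e^{-x^2/(2\sigma^2)}$, and $\sigma=1/\sqrt2$ gives $e^{-x^2}$.

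I do not expect a genuine obstacle, since all the probabilistic content is already in Lemmas \ref{lem:poisson} and \ref{lem:proxy}. The one point needing care is the reduction from the real threshold $x\sqrt n$ to the integer $k=\lfloor x\sqrt n\rfloor$ used in those lemmas, and the integrality of $K_n$ and $J_n$ handles this exactly. Moment convergence is claimed elsewhere in the paper and is not part of this statement, so I would not address uniform integrability here.
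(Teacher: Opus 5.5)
Your proposal is correct and follows the paper's proof, which simply combines Lemmas \ref{lem:poisson} and \ref{lem:proxy}. Your explicit reduction from the threshold $x\sqrt n$ to $k=\lfloor x\sqrt n\rfloor$ via integrality, your treatment of $x=0$, and your check of the Rayleigh scale just spell out details the paper leaves implicit.
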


\begin{proof}
Combine Lemmas \ref{lem:poisson} and \ref{lem:proxy}.
\end{proof}

\section{Tail bounds and convergence of moments}

Convergence in distribution alone does not imply convergence of expectations.  We therefore establish a uniform tail bound.

Represent the reverse order by i.i.d. priorities $Y_v\sim\mathrm{Unif}(0,1)$, revealed in increasing order.  Choose $m=\lfloor n/3\rfloor$ pairwise vertex-disjoint triples of consecutive vertices $(a_i,b_i,c_i)$; for instance, use $(3j,3j+1,3j+2)$ for $0\le j<m$.  For $1\le k\le n$, put
\[
 p=\frac{k}{2n}
\]
and define
\[
 E_i=\{Y_{a_i}\le p,\;Y_{c_i}\le p,\;Y_{b_i}>\max(Y_{a_i},Y_{c_i})\}.
\]
A direct integration gives
\begin{equation}\label{eq:q}
 \Pp(E_i)
 =\int_0^p\int_0^p(1-\max(u,v))\,du\,dv
 =p^2-\frac23p^3.
\end{equation}
For $p\le1/2$, this is at least $(2/3)p^2$.  The events $E_i$ are independent.

Let
\[
 N_p=\#\{v:Y_v\le p\}\sim\operatorname{Bin}(n,p),
\]
so that $\Ee N_p=k/2$.  If $N_p\le k$ and at least one $E_i$ occurs, then a $101$ appears no later than reverse step $k$.  Therefore
\[
 \{K_n>k\}\subseteq\{N_p>k\}\cup\bigcap_{i=1}^mE_i^c.
\]

\begin{proposition}[Uniform tail bound]\label{prop:tail}
There is a universal constant $c_0>0$ such that, for every $n\ge6$ and every $1\le k\le n$,
\[
 \boxed{
 \Pp(K_n>k)
 \le e^{-c_0k}+\exp\!\left(-\frac{k^2}{24n}\right).}
\]
One may take $c_0=(\log4-1)/2$.
\end{proposition}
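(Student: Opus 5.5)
The plan is to bound the two terms of the inclusion displayed just before the statement separately, and then add them:
\[
\{K_n>k\}\subseteq\{N_p>k\}\cup\bigcap_{i=1}^mE_i^c .
\]
Both pieces are standard. The only real care needed is in the constants and in the range conditions on $p$ and $m$.

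\textbf{Binomial term.} Here $N_p\sim\operatorname{Bin}(n,p)$ with $\mu=\Ee N_p=k/2$. We need $\Pp(N_p>2\mu)$. I will use the multiplicative Chernoff bound $\Pp(N\ge(1+\delta)\mu)\le\bigl(e^{\delta}/(1+\delta)^{1+\delta}\bigr)^{\mu}$ with $\delta=1$. This gives
\[
\Pp(N_p>k)\le (e/4)^{k/2}=\exp\!\Bigl(-\tfrac{\log 4-1}{2}\,k\Bigr)=e^{-c_0k},
\]
which is exactly the stated $c_0$. Since $k\le n$, we have $p=k/(2n)\le1/2$, so $p$ is a valid probability and no edge case arises.

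\textbf{Independent triples.} The events $E_i$ are independent, because they depend on disjoint sets of priorities. By \eqref{eq:q} and $p\le 1/2$, each satisfies $\Pp(E_i)\ge \tfrac23p^2$. Hence
\[
\Pp\Bigl(\bigcap_i E_i^c\Bigr)\le(1-\tfrac23p^2)^m\le\exp\!\bigl(-\tfrac23 p^2 m\bigr)=\exp\!\Bigl(-\frac{k^2 m}{6n^2}\Bigr).
\]
It remains to check $m=\lfloor n/3\rfloor\ge n/4$ for $n\ge6$. For $n\ge 12$ this follows from $\lfloor n/3\rfloor\ge n/3-2/3\ge n/4$, and for $6\le n\le 11$ one verifies it directly. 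The case $n=7$ gives $m=2\ge 7/4$, and the others are similar. With $m\ge n/4$, the exponent is at most $-k^2/(24n)$.

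The triples $(3j,3j+1,3j+2)$ are pairwise disjoint and consist of consecutive vertices on the cycle. This holds even when $3\nmid n$, since $3m\le n$. The event $E_i$ really does force a $101$ by time $k$ on $\{N_p\le k\}$. The reason is that the endpoints are revealed by the time all priorities $\le p$ are revealed, which is step $N_p\le k$, while the center arrives strictly later. So at some step $\le k$ the pattern $101$ is present.

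Adding the two bounds gives the proposition. The main, mild, obstacle is verifying the constant $24$, i.e.\ $m\ge n/4$ for small $n$. That is the reason for the hypothesis $n\ge6$, and I would check those few cases by hand.
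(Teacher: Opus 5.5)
Your proposal is correct and follows the paper's proof essentially line for line. It uses the same multiplicative Chernoff bound with $\delta=1$ for $N_p$, and the same independence estimate $\prod_i(1-\Pp(E_i))\le\exp(-m\Pp(E_i))$ combined with $\Pp(E_i)\ge\frac23p^2$ for $p\le 1/2$ and $m\ge n/4$ for $n\ge6$. Your additional checks, namely the small-$n$ verification of $m\ge n/4$ and the explanation of why $E_i\cap\{N_p\le k\}$ forces a $101$ by reverse step $k$, simply make explicit details that the paper leaves to the reader.
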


\begin{proof}
A standard multiplicative Chernoff bound, with $\Ee N_p=k/2$, gives
\[
 \Pp(N_p>k)\le(e/4)^{k/2}=e^{-c_0k}.
\]
For $n\ge6$, $m\ge n/4$.  Using \eqref{eq:q} and $p=k/(2n)$,
\[
 m\Pp(E_i)\ge \frac n4\cdot\frac23\cdot\frac{k^2}{4n^2}
 =\frac{k^2}{24n}.
\]
Independence gives
\[
 \Pp\!\left(\bigcap_iE_i^c\right)
 \le \exp(-m\Pp(E_i)),
\]
which proves the claim.
\end{proof}

\begin{theorem}[Convergence of all fixed moments]\label{thm:moments}
For every fixed $r>0$,
\[
 \boxed{
 \Ee\left[\left(\frac{K_n}{\sqrt n}\right)^r\right]
 \longrightarrow
 \Gamma\!\left(1+\frac r2\right).}
\]
In particular,
\[
 \Ee K_n\sim\frac{\sqrt\pi}{2}\sqrt n
\]
and
\[
 \Vv(K_n)\sim\left(1-\frac\pi4\right)n.
\]
Equivalently,
\[
 \Ee\tau_n
 =n-\frac{\sqrt\pi}{2}\sqrt n+o(\sqrt n).
\]
\end{theorem}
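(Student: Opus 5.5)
The plan is to combine the distributional limit of Theorem \ref{thm:rayleigh} with uniform integrability supplied by Proposition \ref{prop:tail}, and then compute the limiting moments of $R$ explicitly. Since $K_n/\sqrt n\xrightarrow{d}R$, the standard criterion says that $\Ee[(K_n/\sqrt n)^r]\to\Ee R^r$ for every fixed $r>0$, provided the family $\{(K_n/\sqrt n)^r\}_n$ is uniformly integrable. It suffices to show $\sup_n\Ee[(K_n/\sqrt n)^{s}]<\infty$ for some $s>r$; we take $s=2r$, say.

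For this bound we use the layer-cake formula
\[
 \Ee\bigl[(K_n/\sqrt n)^{s}\bigr]=\int_0^\infty s x^{s-1}\,\Pp(K_n>x\sqrt n)\,dx .
\]
Put $k=\lceil x\sqrt n\rceil$ and note $K_n\le n$. Proposition \ref{prop:tail} (valid for $n\ge6$, $1\le k\le n$) bounds the integrand's probability by
\[
 e^{-c_0 x\sqrt n}+e^{-x^2/24}\le e^{-c_0x}+e^{-x^2/24}
\]
for $n\ge1$ and $x\le\sqrt n$. For $x>\sqrt n$ the probability is zero, and finitely many small $n$ contribute only bounded moments because $K_n\le n$. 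The resulting bound $\int_0^\infty s x^{s-1}(e^{-c_0x}+e^{-x^2/24})\,dx<\infty$ does not depend on $n$. This yields uniform integrability. The one delicate point is the rounding $k=\lceil x\sqrt n\rceil$ versus $k\ge1$, but it is harmless: for $x\sqrt n<1$ we simply bound the probability by $1$ on $[0,1]$.

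It then remains to identify $\Ee R^r=\int_0^\infty r x^{r-1}e^{-x^2}\,dx$. Substituting $u=x^2$ gives $\Ee R^r=\Gamma(1+r/2)$. Taking $r=1$ gives $\Ee K_n\sim\Gamma(3/2)\sqrt n=\tfrac{\sqrt\pi}{2}\sqrt n$. Taking $r=2$ gives $\Ee K_n^2/n\to\Gamma(2)=1$. Hence
\[
 \Vv(K_n)/n\to 1-\pi/4,
\]
using convergence of both the first and second moments. Finally, $\tau_n=n-K_n$ gives the statement for $\Ee\tau_n$. No step looks hard; the only real work is the uniform integrability bound, which follows directly from Proposition \ref{prop:tail}.
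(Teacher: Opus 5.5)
Your argument follows the paper's route. Both obtain an $n$-free integrable majorant for $\Pp(K_n>x\sqrt n)$ from Proposition~\ref{prop:tail} and combine it with the tail convergence of Theorem~\ref{thm:rayleigh}. The paper applies dominated convergence directly in $r\int_0^\infty x^{r-1}\Pp(K_n>x\sqrt n)\,dx$, while you pass through a bounded $2r$-th moment and uniform integrability, which is equivalent. Your evaluation of $\Gamma(1+r/2)$, the mean, the variance and $\Ee\tau_n$ is correct.

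The one step that fails as written is the rounding you call harmless. With $k=\lceil x\sqrt n\rceil\ge x\sqrt n$ you have $\{K_n>k\}\subseteq\{K_n>x\sqrt n\}$. So Proposition~\ref{prop:tail} bounds the smaller probability and gives no upper bound on the integrand; for non-integer $x\sqrt n$ the missing piece is the event $\{K_n=\lceil x\sqrt n\rceil\}$. Because $K_n$ is integer-valued, the correct identity is $\Pp(K_n>x\sqrt n)=\Pp(K_n>\lfloor x\sqrt n\rfloor)$. But $k=\lfloor x\sqrt n\rfloor$ only satisfies $k\ge x\sqrt n-1$, so you cannot keep the bound $e^{-c_0x\sqrt n}+e^{-x^2/24}$.

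The repair is the paper's. For $x\sqrt n\ge2$ use $k\ge x\sqrt n/2$, which gives $\Pp(K_n>x\sqrt n)\le e^{-c_0x/2}+e^{-x^2/96}$. For $x\sqrt n<2$ (hence $x<2$) use the trivial bound $1$. The resulting majorant is still independent of $n$ and integrable against $sx^{s-1}$, so the rest of your proof goes through unchanged.
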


\begin{proof}
Let $Z_n=K_n/\sqrt n$ and fix $n\ge6$.  Since $K_n\le n-1$, we have $\Pp(Z_n>x)=0$ whenever $x\ge\sqrt n$.  Thus it suffices to consider $0\le x<\sqrt n$, for which $k=\lfloor x\sqrt n\rfloor$ satisfies $0\le k\le n-1$ and
\[
 \Pp(Z_n>x)=\Pp(K_n>k).
\]
If $x\sqrt n\ge2$, then $k\ge x\sqrt n/2$, and Proposition \ref{prop:tail} gives
\[
 \Pp(Z_n>x)
 \le
 \exp\!\left(-\frac{c_0x\sqrt n}{2}\right)
 +\exp\!\left(-\frac{x^2}{96}\right)
 \le
 \exp(-c_1x)+\exp\!\left(-\frac{x^2}{96}\right),
\]
where $c_1=c_0\sqrt6/2>0$.  When $x\sqrt n<2$ we simply use $\Pp(Z_n>x)\le1$.  Enlarging constants if necessary therefore yields universal $C,c,c'>0$ such that
\[
 \Pp(Z_n>x)\le C e^{-cx}+C e^{-c'x^2},
 \qquad x\ge0,
\]
uniformly for $n\ge6$.  The finitely many smaller values of $n$ are immaterial.

For any fixed $r>0$, the function
\[
 x\longmapsto r x^{r-1}\bigl(Ce^{-cx}+Ce^{-c'x^2}\bigr)
\]
is integrable on $(0,\infty)$.  Theorem \ref{thm:rayleigh} gives pointwise convergence of the tails at every $x\ge0$, so the tail-integral identity and dominated convergence yield
\[
 \Ee Z_n^r
 =r\int_0^\infty x^{r-1}\Pp(Z_n>x)\,dx
 \longrightarrow
 r\int_0^\infty x^{r-1}e^{-x^2}\,dx
 =\Gamma\!\left(1+\frac r2\right).
\]
The displayed asymptotics follow by taking $r=1,2$.
\end{proof}

\section{Powers of cycles and a Weibull hierarchy}\label{sec:powers}

The Rayleigh law extends to a Weibull family for powers of cycles.  Fix an integer $d\ge1$.  For $n>2d+1$, let $C_n^d$ denote the $d$th power of the cycle: two vertices are adjacent when their cyclic distance is at most $d$.  Reveal its vertices in a uniformly random order and let $\tau_{n,d}$ be the last time at which the induced graph on the revealed vertices contains an isolated vertex.  Put
\[
 K_{n,d}=n-\tau_{n,d}.
\]
Thus $K_{n,1}=K_n$.

In the reversed process, with $U_k$ denoting the first $k$ reverse-revealed vertices, write
\[
 N_d(v)=\{v-d,\ldots,v-1,v+1,\ldots,v+d\}
\]
for the $2d$ neighbors of $v$ in $C_n^d$.  Then
\begin{equation}\label{eq:powerK}
 K_{n,d}=\min\{k:\exists v\notin U_k\text{ with }N_d(v)\subseteq U_k\}.
\end{equation}
Introduce the simpler proxy
\[
 J_{n,d}=\min\{k:\exists v\text{ with }N_d(v)\subseteq U_k\},
\]
so that $J_{n,d}\le K_{n,d}$.

Set
\[
 a_{n,d}=n^{1-1/(2d)}.
\]

\begin{lemma}[Poisson neighborhood limit]\label{lem:powerpoisson}
For every fixed $d\ge1$ and $x\ge0$, let $k=\lfloor x a_{n,d}\rfloor$ and define
\[
 X_{n,k}^{(d)}=\#\{v:N_d(v)\subseteq U_k\}.
\]
Then
\[
 X_{n,k}^{(d)}\xrightarrow{d}\Pois(x^{2d}).
\]
Consequently,
\[
 \Pp(J_{n,d}>x a_{n,d})\longrightarrow e^{-x^{2d}}.
\]
\end{lemma}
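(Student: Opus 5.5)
We will follow the factorial-moment argument of Lemma \ref{lem:poisson}, replacing edges of $H_n$ by the $n$ punctured neighborhoods $N_d(v)$, each a set of $2d$ vertices.

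\textbf{First moment.} For a fixed $v$,
\[
 \Pp(N_d(v)\subseteq U_k)=\frac{(k)_{2d}}{(n)_{2d}}.
\]
With $k=\lfloor x a_{n,d}\rfloor$ we have $(k/n)^{2d}\sim x^{2d}n^{-1}$. Hence
\[
 \Ee X^{(d)}_{n,k}=n\frac{(k)_{2d}}{(n)_{2d}}\to x^{2d}.
\]

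\textbf{Disjoint tuples.} Fix $r$ and expand $\Ee[(X^{(d)}_{n,k})_r]$ as a sum over ordered $r$-tuples of distinct centers $v_1,\dots,v_r$. There are $n^r+O(n^{r-1})$ tuples whose neighborhoods are pairwise disjoint. For such a tuple the union has exactly $2dr$ vertices, so it is present with probability $(k)_{2dr}/(n)_{2dr}\sim x^{2dr}n^{-r}$. These tuples therefore contribute $x^{2dr}+o(1)$.

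\textbf{Overlapping tuples.} Group the centers into clusters, where two centers are linked if their neighborhoods intersect, i.e.\ their cyclic distance is at most $2d$. For a fixed overlap type with $c<r$ clusters there are $O(n^c)$ placements.

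The key geometric claim is that a cluster of $s\ge2$ distinct centers has a union of neighborhoods of size at least $2d+s$. To see this, take $n$ large so the cluster lies in an arc, and order its centers $u_1<\dots<u_s$ along that arc. Each $N_d(u_i)$ is an interval of length $2d+1$ with its midpoint $u_i$ removed. The union is then an interval of length at least $2d+1+(u_s-u_1)$, minus at most $s$ removed midpoints. Each midpoint $u_i$ with $1<i<s$ actually lies in $N_d(u_{i-1})$ or $N_d(u_{i+1})$, because consecutive linked centers are within distance $2d$, so it is not removed. Only $u_1$ and $u_s$ can be missing, and $u_1$ is missing only if no other center lies within distance $d$ of it; similarly for $u_s$.

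Work this out with care in the case $u_s-u_1\ge s-1$. When the extreme centers are covered, the union is the full interval, of size at least $2d+s$. When they are not covered, the gaps next to the extremes are at least $d+1$, which increases $u_s-u_1$ enough to compensate. For example, with $s=2$ and $u_2-u_1=j\le 2d$: if $j\le d$ the union has $2d+1+j$ vertices, and if $j>d$ it has $2d-1+j\ge 3d\ge 2d+2$ vertices when $d\ge2$. For $d=1$ this reduces to the path count of Lemma \ref{lem:poisson}.

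Granting the claim, the union has $v\ge 2dr-\ldots$ vertices; more precisely, summing over clusters gives $v\ge 2dc+r$ when all clusters have size at least $2$, and the weaker uniform bound $v\ge 2dc+(r-c)$ in general. The contribution of a type is then
\[
 O(n^c)\,O\bigl((k/n)^v\bigr)=O\bigl(n^{c-v/(2d)}\bigr)\le O\bigl(n^{-(r-c)/(2d)}\bigr)=o(1).
\]
Since there are finitely many overlap types, $\Ee[(X^{(d)}_{n,k})_r]\to x^{2dr}$. The factorial-moment criterion gives $X^{(d)}_{n,k}\xrightarrow{d}\Pois(x^{2d})$.

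\textbf{Conclusion.} Finally $\{J_{n,d}>k\}=\{X^{(d)}_{n,k}=0\}$, so $\Pp(J_{n,d}>k)\to e^{-x^{2d}}$. Passing between $J_{n,d}>x a_{n,d}$ and $J_{n,d}>\lfloor xa_{n,d}\rfloor$ is exact, since $J_{n,d}$ is integer-valued.

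\textbf{Main obstacle.} The delicate step is the geometric claim that every genuinely overlapping cluster contributes at least one extra vertex per additional center. We only need $v>2dc$ for each overlapping type, so a crude bound suffices: $v\ge 2d+1$ for any cluster of at least two distinct centers. This already yields an $O(n^{-1/(2d)})$ contribution per extra overlap. If the $s=2$ computation above is checked and extended by induction, adding centers one at a time in arc order, each new center adds at least one new vertex, and this gives the claim.
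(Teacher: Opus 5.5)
Your route is the paper's: factorial moments, pairwise-disjoint tuples contributing $x^{2dr}$, and overlapping tuples grouped into components with $O(n^c)$ placements each. What needs fixing is the sharper geometric claim in your overlapping-tuples paragraph. Delete it rather than try to prove it.

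\textbf{The sharper claim is wrongly argued, and false for $d=1$.}
\begin{enumerate}[label=\textup{(\roman*)}]
 \item The justification fails. An interior center $u_i$ lies in $N_d(u_{i-1})\cup N_d(u_{i+1})$ only if a neighboring center is within distance $d$, not $2d$. For $d=2$, the centers $0,4,8$ form a genuine cluster, since consecutive neighborhoods meet at $2$ and $6$. Yet the vertex $4$ lies in none of the three neighborhoods, so interior midpoints can be missing.
 \item For $d=1$ the claim itself is false. Centers $0,2$ give $N_1(0)\cup N_1(2)=\{-1,1,3\}$, of size $3<2d+s=4$. More generally, centers $0,2,\dots,2(s-1)$ form a path in $H_n$ with only $s+1$ vertices.
 \item So the $d=1$ case contradicts your claim rather than reducing to Lemma~\ref{lem:poisson}. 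In particular, your bound $v\ge 2dc+r$ fails for $d=1$.
\end{enumerate}
Your weaker bound $v\ge 2dc+(r-c)$ does happen to remain true, but your argument does not establish it.

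\textbf{None of this is needed.} The crude bound in your last paragraph is exactly the paper's argument:
\begin{enumerate}[label=\textup{(\roman*)}]
 \item For large $n$, distinct centers give distinct $2d$-sets.
 \item Hence a singleton component contributes exactly $2d$ vertices, and each of the $h\ge1$ non-singleton components contributes at least $2d+1$.
 \item Components are vertex-disjoint, so $v\ge 2dc+h$.
 \item The contribution of the type is therefore $O(n^{c-v/(2d)})=O(n^{-h/(2d)})=o(1)$.
\end{enumerate}
With the sharper claim replaced by this, your proof is complete and coincides with the paper's. Your remark that $J_{n,d}$ is integer-valued, so that $\{J_{n,d}>xa_{n,d}\}=\{J_{n,d}>k\}$, is a detail the paper leaves implicit.
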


\begin{proof}
We use factorial moments.  For one fixed neighborhood,
\[
 \Pp(N_d(v)\subseteq U_k)=\frac{(k)_{2d}}{(n)_{2d}},
\]
and hence
\[
 \Ee X_{n,k}^{(d)}
 =n\frac{(k)_{2d}}{(n)_{2d}}
 \longrightarrow x^{2d}.
\]
Fix a positive integer $q$.  In $\Ee[(X_{n,k}^{(d)})_q]$, ordered $q$-tuples of pairwise disjoint neighborhoods number
\[
 n^q+O_{d,q}(n^{q-1}),
\]
because each $N_d(v)$ intersects only $O_d(1)$ other neighborhoods (an intersection forces the two centers to be at cyclic distance at most $2d$).  Their contribution to the $q$th factorial moment is therefore
\[
 \bigl(n^q+O_{d,q}(n^{q-1})\bigr)
 \frac{(k)_{2dq}}{(n)_{2dq}}
 \longrightarrow x^{2dq}.
\]

It remains to bound tuples containing at least one overlap.  Build the overlap graph on the selected neighborhoods, joining two selected neighborhoods when they intersect.  Suppose this overlap graph has $c$ connected components, of which $h\ge1$ are non-singletons.  For all sufficiently large $n$, distinct neighborhoods $N_d(v)$ are distinct $2d$-sets.  Hence a singleton component contributes exactly $2d$ vertices to the union, while every non-singleton component contributes at least $2d+1$.  If $s$ is the total number of distinct vertices in the union, then
\[
 s\ge 2dc+h.
\]
For fixed $d$ and $q$, a connected overlap component has only $O_{d,q}(1)$ possible relative shapes once one center is fixed, since intersecting neighborhoods have centers at bounded cyclic distance.  Thus a tuple with $c$ overlap components has $O_{d,q}(n^c)$ placements.  Its total contribution is at most
\[
 O_{d,q}(n^c)\,O\!\left((k/n)^s\right)
 =O_{d,q}\!\left(n^{c-s/(2d)}\right)
 \le O_{d,q}\!\left(n^{-h/(2d)}\right)
 =o(1).
\]
There are only finitely many overlap types for fixed $d$ and $q$.  Therefore
\[
 \Ee\!\left[(X_{n,k}^{(d)})_q\right]\longrightarrow x^{2dq},
\]
the factorial moments of $\Pois(x^{2d})$.  The standard factorial-moment criterion therefore gives
$X_{n,k}^{(d)}\xrightarrow{d}\Pois(x^{2d})$.  The assertion for $J_{n,d}$ follows from
\[
 \{J_{n,d}>k\}=\{X_{n,k}^{(d)}=0\}.
\]
\end{proof}

\begin{remark}[Poisson-approximation viewpoint]\label{rem:chenstein}
The family $\{N_d(v):v\in V(C_n^d)\}$ is a $2d$-uniform family with bounded local overlap: each neighborhood intersects only $O_d(1)$ others.  Thus Lemma \ref{lem:powerpoisson} is a particularly simple instance of the classical Poisson-approximation paradigm for rare, locally dependent events \cite{ArratiaGoldsteinGordon1989}.  We use factorial moments because they expose the cycle-power overlap geometry and the limiting parameter $x^{2d}$ directly.
\end{remark}

\begin{lemma}[Proxy error]\label{lem:powerproxy}
For fixed $d\ge1$, $x\ge0$, and $k=\lfloor x a_{n,d}\rfloor$,
\[
 0\le \Pp(K_{n,d}>k)-\Pp(J_{n,d}>k)
 \le n\frac{(k)_{2d+1}}{(n)_{2d+1}}
 =O_{d,x}(n^{-1/(2d)}).
\]
\end{lemma}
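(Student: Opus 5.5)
The plan mirrors the proof of Lemma~\ref{lem:proxy}, with the triple $\{v-1,v,v+1\}$ replaced by the closed neighborhood $N_d(v)\cup\{v\}$ of size $2d+1$.

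The lower bound $0\le \Pp(K_{n,d}>k)-\Pp(J_{n,d}>k)$ is immediate. Indeed $J_{n,d}\le K_{n,d}$ by \eqref{eq:powerK} and the definition of the proxy, so $\{J_{n,d}>k\}\subseteq\{K_{n,d}>k\}$. The difference of the two probabilities is therefore exactly $\Pp(J_{n,d}\le k<K_{n,d})$, and this event is what must be bounded.

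On $\{J_{n,d}\le k<K_{n,d}\}$, I look at the reverse step $j=J_{n,d}\le k$ at which some neighborhood $N_d(v)$ first becomes fully contained in $U_j$. If $v\notin U_j$ at that moment, then $v$ witnesses the condition in \eqref{eq:powerK}, so $K_{n,d}\le j\le k$, a contradiction. Hence $v\in U_j$, and so the whole closed neighborhood satisfies $N_d(v)\cup\{v\}\subseteq U_j\subseteq U_k$. This set has $2d+1$ distinct vertices, since $n>2d+1$. For a fixed $v$, exchangeability of the uniform order gives
\[
\Pp\bigl(N_d(v)\cup\{v\}\subseteq U_k\bigr)=\frac{(k)_{2d+1}}{(n)_{2d+1}}.
\]
A union bound over the $n$ choices of $v$ then yields the stated bound $n\,(k)_{2d+1}/(n)_{2d+1}$.

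It remains to check the order of magnitude. With $k=\lfloor x a_{n,d}\rfloor\le x n^{1-1/(2d)}$ and $n$ large enough that $k<n/2$, we have $(k)_{2d+1}/(n)_{2d+1}\le (2k/n)^{2d+1}$. This gives
\[
n\,\frac{(k)_{2d+1}}{(n)_{2d+1}}=O_{d,x}\!\left(n\cdot n^{-(2d+1)/(2d)}\right)=O_{d,x}\!\left(n^{-1/(2d)}\right).
\]
For the finitely many small $n$, the bound is absorbed into the constant. There is no real obstacle. The only point needing care is the observation that at the first completion time the center must already be present, which is the same argument as in Lemma~\ref{lem:proxy}.
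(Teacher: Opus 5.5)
Your proposal is correct and matches the paper's proof: at the first completion time $J_{n,d}$ the center must already be present, since otherwise $K_{n,d}=J_{n,d}\le k$. Hence some closed neighborhood of size $2d+1$ lies in $U_k$, and a union bound over the $n$ centers gives the estimate; the detour through $k<n/2$ is unnecessary, since each factor already satisfies $(k-i)/(n-i)\le k/n$, but it does no harm.
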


\begin{proof}
If $J_{n,d}\le k<K_{n,d}$, then at the first time some neighborhood $N_d(v)$ is fully present in the reverse set, its center $v$ must already be present as well; otherwise $v$ would be an isolated revealed vertex in the forward process and \eqref{eq:powerK} would give $K_{n,d}=J_{n,d}$.  Thus $U_k$ contains the entire closed neighborhood $N_d(v)\cup\{v\}$, a set of size $2d+1$.  A union bound over the $n$ possible centers gives the displayed estimate.  Since $k/n=O(n^{-1/(2d)})$, the final order follows.
\end{proof}

\begin{theorem}[Weibull hierarchy]\label{thm:weibull}
For every fixed integer $d\ge1$,
\[
 \boxed{
 \frac{K_{n,d}}{n^{1-1/(2d)}}\xrightarrow{d}W_d,
 \qquad
 \Pp(W_d>x)=e^{-x^{2d}},\quad x\ge0.}
\]
Thus $W_d$ is Weibull with shape parameter $2d$ and scale $1$.  The case $d=1$ is the Rayleigh law of Theorem \ref{thm:rayleigh}.
\end{theorem}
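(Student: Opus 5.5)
The proof will follow the same pattern as Theorem \ref{thm:rayleigh}: combine Lemma \ref{lem:powerpoisson} with Lemma \ref{lem:powerproxy}.

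\emph{Nonzero limit points.} Fix $x>0$ and put $k=\lfloor x a_{n,d}\rfloor$. Since $K_{n,d}$ is integer valued, $\{K_{n,d}>x a_{n,d}\}=\{K_{n,d}>k\}$, and likewise for $J_{n,d}$. Lemma \ref{lem:powerproxy} gives
\[
 \bigl|\Pp(K_{n,d}>k)-\Pp(J_{n,d}>k)\bigr|=O_{d,x}(n^{-1/(2d)})\to0 .
\]
Lemma \ref{lem:powerpoisson} gives $\Pp(J_{n,d}>k)\to e^{-x^{2d}}$. Together these yield
\[
 \Pp\bigl(K_{n,d}/a_{n,d}>x\bigr)\to e^{-x^{2d}}.
\]

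\emph{The point $x=0$.} Here $k=0$. We have $K_{n,d}\ge1$ always, because $U_0=\emptyset$ contains no neighborhood. Hence $\Pp(K_{n,d}/a_{n,d}>0)=1=e^{0}$. This case is not needed anyway, since the limiting law is continuous.

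\emph{Conclusion.} The tail functions $\Pp(K_{n,d}/a_{n,d}>x)$ converge to $e^{-x^{2d}}$ for every $x\ge0$. This is a continuous distribution function on $[0,\infty)$, and all variables are nonnegative. Convergence of the distribution functions at every point therefore gives convergence in distribution to $W_d$. For $d=1$ we have $a_{n,1}=\sqrt n$ and $K_{n,1}=K_n$, so the statement recovers Theorem \ref{thm:rayleigh}.

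\emph{Where care is needed.} There is no real obstacle; the only subtle point is checking that the two lemmas use the same integer $k$, which they do by construction. The substantive work was already done in the factorial-moment overlap bound of Lemma \ref{lem:powerpoisson}.
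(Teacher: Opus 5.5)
Your proposal is correct and follows the paper's proof, which simply combines Lemma \ref{lem:powerpoisson}, giving the Poisson limit for the completed-neighborhood proxy $J_{n,d}$, with Lemma \ref{lem:powerproxy}, giving the $O_{d,x}(n^{-1/(2d)})$ proxy error. Your additional remarks (rounding via integer-valuedness, the point $x=0$, and passing from tail convergence to a continuous limit law) correctly spell out details the paper leaves implicit.
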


\begin{proof}
Combine Lemmas \ref{lem:powerpoisson} and \ref{lem:powerproxy}.
\end{proof}

The convergence extends to all fixed positive moments.  We record a tail estimate that will also make the required uniform integrability transparent.

\begin{proposition}[Uniform stretched-exponential tail]\label{prop:powertail}
Fix $d\ge1$.  There are constants $c_0>0$ and $c_d>0$ such that, for all sufficiently large $n$ and all $1\le k\le n$,
\[
 \boxed{
 \Pp(K_{n,d}>k)
 \le e^{-c_0k}
 +\exp\!\left(-c_d\frac{k^{2d}}{n^{2d-1}}\right).}
\]
One may take
\[
 c_0=\frac{\log4-1}{2},
 \qquad
 c_d=\frac{1}{2^{2d+2}(2d+1)}.
\]
\end{proposition}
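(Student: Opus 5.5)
We mimic the proof of Proposition \ref{prop:tail}, using i.i.d.\ priorities $Y_v\sim\mathrm{Unif}(0,1)$ revealed in increasing order and the threshold $p=k/(2n)$. Let $N_p=\#\{v:Y_v\le p\}\sim\operatorname{Bin}(n,p)$, so $\Ee N_p=k/2$. The same multiplicative Chernoff bound gives $\Pp(N_p>k)\le(e/4)^{k/2}=e^{-c_0k}$, with the same $c_0$ as before. This accounts for the first term.

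For the second term, partition (most of) the cycle into $m=\lfloor n/(2d+1)\rfloor$ pairwise disjoint windows. Each window consists of $2d+1$ consecutive vertices, with center $b_i$ and neighbourhood $N_d(b_i)$ equal to the other $2d$ vertices of the window. Define
\[
 E_i=\Bigl\{Y_u\le p\ \text{for all } u\in N_d(b_i),\ Y_{b_i}>\max_{u\in N_d(b_i)}Y_u\Bigr\}.
\]
Because the windows are disjoint, the $E_i$ are independent. On $E_i\cap\{N_p\le k\}$, the whole neighbourhood $N_d(b_i)$ is revealed in reverse by step $N_p\le k$ while $b_i$ is still absent. By \eqref{eq:powerK} this forces $K_{n,d}\le k$. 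Hence
\[
 \{K_{n,d}>k\}\subseteq\{N_p>k\}\cup\bigcap_i E_i^c,
 \qquad
 \Pp\Bigl(\bigcap_iE_i^c\Bigr)\le\exp\bigl(-m\,\Pp(E_1)\bigr).
\]

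It remains to bound $\Pp(E_1)$ from below, and this step carries the constant. The crude estimate is
\[
 \Pp(E_1)\ge \Pp\bigl(Y_u\le p\ \forall u\in N_d(b_1)\bigr)\cdot\Pp(Y_{b_1}>p)=p^{2d}(1-p)\ge \tfrac12 p^{2d},
\]
since $p\le 1/2$. For large $n$ we have $m\ge n/(2(2d+1))$, and then
\[
 m\,\Pp(E_1)\ge \frac{n}{2(2d+1)}\cdot\frac12\cdot\frac{k^{2d}}{2^{2d}n^{2d}}
 =\frac{1}{2^{2d+2}(2d+1)}\cdot\frac{k^{2d}}{n^{2d-1}},
\]
which is exactly the stated $c_d$. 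Combining the two bounds proves the claim.

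The main thing to check is the bookkeeping. The condition $m\ge n/(2(2d+1))$ holds once $n\ge 2(2d+1)$, which is where ``sufficiently large $n$'' enters. We also need $n>2d+1$, so that the windows genuinely realize $N_d(b_i)$ as $2d$ distinct vertices inside the window. The last point is that revealing all $Y\le p$ values takes exactly $N_p$ reverse steps; this is immediate from the priority representation.
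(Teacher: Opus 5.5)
Your proposal is correct and follows the paper's proof essentially step for step: the same priority coupling with $p=k/(2n)$, the same Chernoff bound, the same $m=\lfloor n/(2d+1)\rfloor$ disjoint windows, and the same inclusion. The only difference is that you lower-bound $\Pp(E_1)$ through the sub-event $\{Y_{b_1}>p\}$, getting $p^{2d}(1-p)$, where the paper integrates exactly to get $p^{2d}-\frac{2d}{2d+1}p^{2d+1}$; both are at least $\frac12 p^{2d}$ when $p\le 1/2$, so you obtain the same $c_d$.
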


\begin{proof}
Assign i.i.d. priorities $Y_v\sim\mathrm{Unif}(0,1)$ and reveal vertices in increasing priority order.  Choose
\[
 m=\left\lfloor\frac{n}{2d+1}\right\rfloor
\]
centers whose closed radius-$d$ neighborhoods are pairwise disjoint.  One explicit choice, using representatives $0,\ldots,n-1$, is
\[
 v_j=d+j(2d+1),\qquad 0\le j<m.
\]
For one such center $v$, put $p=k/(2n)$ and let $E_v$ be the event that all $2d$ neighbors of $v$ have priority at most $p$, while $Y_v$ is larger than all of those neighbor priorities.  Direct integration gives
\[
 \Pp(E_v)
 =p^{2d}-\frac{2d}{2d+1}p^{2d+1}
 \ge \frac12 p^{2d},
\]
since $p\le1/2$.  The chosen events are independent.

Let
\[
 N_p=\#\{v:Y_v\le p\}\sim\operatorname{Bin}(n,p),
\]
so $\Ee N_p=k/2$.  If $N_p\le k$ and at least one $E_v$ occurs, then at some reverse time at most $k$ all neighbors of that center are present before the center, so an isolated vertex has already been created.  Therefore
\[
 \{K_{n,d}>k\}
 \subseteq
 \{N_p>k\}\cup\bigcap_v E_v^c.
\]
The same Chernoff bound as before gives
\[
 \Pp(N_p>k)\le(e/4)^{k/2}=e^{-c_0k}.
\]
For $n\ge2(2d+1)$,
\[
 m\ge\frac{n}{2(2d+1)},
\]
and hence
\[
 m\Pp(E_v)
 \ge
 \frac{n}{2(2d+1)}\cdot\frac12
 \left(\frac{k}{2n}\right)^{2d}
 =c_d\frac{k^{2d}}{n^{2d-1}}.
\]
Independence completes the proof.
\end{proof}

\begin{theorem}[Moments for powers of cycles]\label{thm:powermoments}
For every fixed $d\ge1$ and every fixed $r>0$,
\[
 \boxed{
 \Ee\!\left[
 \left(\frac{K_{n,d}}{n^{1-1/(2d)}}\right)^r
 \right]
 \longrightarrow
 \Gamma\!\left(1+\frac{r}{2d}\right).}
\]
Consequently,
\[
 \Ee K_{n,d}
 \sim
 \Gamma\!\left(1+\frac{1}{2d}\right)n^{1-1/(2d)}
\]
and
\[
 \Vv(K_{n,d})
 \sim
 \left[
 \Gamma\!\left(1+\frac1d\right)
 -\Gamma\!\left(1+\frac{1}{2d}\right)^2
 \right]n^{2-1/d}.
\]
\end{theorem}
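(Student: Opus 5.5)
The plan mirrors the proof of Theorem \ref{thm:moments}: turn Proposition \ref{prop:powertail} into a tail bound for $Z_n=K_{n,d}/a_{n,d}$ that holds uniformly in $n$ and is integrable against $rx^{r-1}$. Then combine it with the pointwise tail convergence from Theorem \ref{thm:weibull} via the identity $\Ee Z_n^r=r\int_0^\infty x^{r-1}\Pp(Z_n>x)\,dx$ and dominated convergence.

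\textbf{Step 1: reduce to integer thresholds.} Since $K_{n,d}\le n-1$, we have $\Pp(Z_n>x)=0$ once $x a_{n,d}\ge n$. So it suffices to take $0\le x<n/a_{n,d}=n^{1/(2d)}$. For such $x$, set $k=\lfloor x a_{n,d}\rfloor\le n-1$; then $\Pp(Z_n>x)=\Pp(K_{n,d}>k)$.

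\textbf{Step 2: a uniform dominating function.} When $x a_{n,d}\ge2$ we have $k\ge x a_{n,d}/2$, and Proposition \ref{prop:powertail} gives two terms.
\begin{itemize}
\item[(a)] The first term is $e^{-c_0k}\le \exp(-c_0 x a_{n,d}/2)$. Since $a_{n,d}\ge1$, this is at most $e^{-c_0x/2}$.
\item[(b)] The second term is
\[
 \exp\!\Bigl(-c_d\frac{k^{2d}}{n^{2d-1}}\Bigr)\le \exp\!\Bigl(-c_d 2^{-2d}x^{2d}\frac{a_{n,d}^{2d}}{n^{2d-1}}\Bigr)=\exp\bigl(-c_d2^{-2d}x^{2d}\bigr),
\]
because $a_{n,d}^{2d}=n^{2d-1}$. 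This exact cancellation is why the scale $n^{1-1/(2d)}$ is the right one.
\end{itemize}
When $x a_{n,d}<2$, we have $x<2$, and we simply bound the probability by $1$, which we absorb by enlarging a constant. Altogether, for all sufficiently large $n$,
\[
 \Pp(Z_n>x)\le Ce^{-cx}+Ce^{-c'x^{2d}}\qquad(x\ge 0),
\]
where $C,c,c'>0$ depend only on $d$. Finitely many small $n$ do not affect a limit.

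\textbf{Step 3: pass to the limit.} For fixed $r>0$, the function $rx^{r-1}(Ce^{-cx}+Ce^{-c'x^{2d}})$ is integrable on $(0,\infty)$; near $0$ this is because $r>0$. Theorem \ref{thm:weibull} gives $\Pp(Z_n>x)\to e^{-x^{2d}}$ at every $x\ge0$, since the limit law is continuous. Dominated convergence then gives
\[
 \Ee Z_n^r\to r\int_0^\infty x^{r-1}e^{-x^{2d}}\,dx.
\]
Substituting $u=x^{2d}$ turns this into $\frac{r}{2d}\Gamma\bigl(\frac{r}{2d}\bigr)=\Gamma\bigl(1+\frac{r}{2d}\bigr)$. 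Taking $r=1$ gives the mean. Taking $r=1,2$ together gives $\Vv(Z_n)\to\Gamma(1+1/d)-\Gamma(1+1/(2d))^2$, and multiplying by $a_{n,d}^2=n^{2-1/d}$ gives the variance asymptotic.

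\textbf{Main obstacle.} There is no serious obstacle once Proposition \ref{prop:powertail} is available. The only points needing care are in Step 2: checking that the exponent cancellation is exact, so the domination is uniform in $n$, and handling the small-$k$ regime where the bound $k\ge x a_{n,d}/2$ fails. The latter is a bounded range of $x$, where the trivial bound $1$ suffices.
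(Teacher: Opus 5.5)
Your proposal is correct and follows the paper's proof essentially step for step. Both arguments reduce to $k=\lfloor x a_{n,d}\rfloor$ for $x<n^{1/(2d)}$, use $k\ge x a_{n,d}/2$ together with $a_{n,d}^{2d}=n^{2d-1}$ to get the $n$-uniform dominating tail $Ce^{-cx}+Ce^{-c'x^{2d}}$, and apply dominated convergence in the tail-integral identity, with your Gamma evaluation and variance deduction filling in details the paper leaves implicit.
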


\begin{proof}
Let $Z_{n,d}=K_{n,d}/a_{n,d}$, where $a_{n,d}=n^{1-1/(2d)}$.  Since $K_{n,d}\le n-1$, the tail vanishes when
\[
 x\ge \frac{n}{a_{n,d}}=n^{1/(2d)}.
\]
It therefore suffices to consider $0\le x<n^{1/(2d)}$.  Put $k=\lfloor xa_{n,d}\rfloor$, so $0\le k\le n-1$.  If $xa_{n,d}\ge2$, then $k\ge xa_{n,d}/2$.  Proposition \ref{prop:powertail} and the identity $a_{n,d}^{2d}=n^{2d-1}$ give
\[
 \Pp(Z_{n,d}>x)
 \le
 \exp\!\left(-\frac{c_0xa_{n,d}}2\right)
 +\exp\!\left(-\frac{c_d}{2^{2d}}x^{2d}\right).
\]
For all sufficiently large $n$, $a_{n,d}\ge1$, so the first term is at most $e^{-c_0x/2}$.  When $xa_{n,d}<2$ we use the trivial bound $1$.  Hence there are constants $C_d,c,c'_d>0$, depending only on $d$, such that
\[
 \Pp(Z_{n,d}>x)
 \le C_de^{-cx}+C_de^{-c'_d x^{2d}},
 \qquad x\ge0,
\]
uniformly in all sufficiently large $n$.

This dominating tail is integrable against $r x^{r-1}$ for every fixed $r>0$.  Theorem \ref{thm:weibull} gives pointwise convergence of the tails, so dominated convergence and the tail-integral identity yield
\[
 \Ee Z_{n,d}^r
 \longrightarrow
 r\int_0^\infty x^{r-1}e^{-x^{2d}}\,dx
 =\Gamma\!\left(1+\frac{r}{2d}\right).
\]
The mean and variance formulas follow from $r=1,2$.
\end{proof}

\begin{remark}
The critical exponent has a simple rare-event interpretation.  When $k$ vertices remain unrevealed, a fixed revealed vertex of $C_n^d$ is isolated only if all of its $2d$ neighbors lie among those $k$ vertices, an event of order $(k/n)^{2d}$.  Balancing $n(k/n)^{2d}$ at constant order gives $k\asymp n^{1-1/(2d)}$.  The theorem identifies the full limiting law, not only this heuristic scale.
\end{remark}

\section{Computational verification}
The exact formula in Theorem \ref{thm:exact} was checked independently by exhaustive enumeration of every permutation for $3\le n\le11$ and by an exact subset dynamic program for every $n\le20$.  At $n=11$ the exhaustive check covers all $11!=39{,}916{,}800$ reveal orders.  Both procedures agree with the closed formula for every value of $k$ tested.  Independent small-case reverse/forward checks were also carried out for powers of cycles.  These computations are not used in the proofs; the verification code and run outputs are supplied as Supplementary Information.

\section{Discussion}

The exact law separates the problem into three combinatorial ingredients: cyclic zero-gaps of length at least two, peakless orders inside occupied blocks, and an exponential generating function whose powers produce Stirling numbers.  The asymptotic law, by contrast, is controlled only by the first distance-two collision in the reverse process; triple collisions are negligible at the $\sqrt n$ scale.  This is why the finite formula is more structured than the limiting law suggests.

For powers of cycles, the Rayleigh law at $d=1$ becomes a Weibull family arising from rare completed neighborhoods.  The model-specific step is the reduction of the last-isolation time to the neighborhood problem; for $d=1$, the exact enumeration also retains the prefix history that disappears from the limiting law.

\section*{Data and code availability}
No external data are used.  The computational verification code and run outputs are supplied as Supplementary Information; they include independent brute-force and subset-dynamic-programming checks of the exact finite-$n$ formula and exact small-case reverse/forward checks for powers of cycles.

\section*{Statements and declarations}
\noindent\textbf{Funding.} The author received no funding for this work.

\medskip
\noindent\textbf{Competing interests.} The author declares no competing interests.

\end{document}